\newcounter{boxsize}
\newcounter{tempcounter}
\newcommand\smbox{\put(0,0){\line(1,0){\value{boxsize}}}%
  \put(\value{boxsize},0){\line(0,1){\value{boxsize}}}%
  \put(0,0){\line(0,1){\value{boxsize}}}%
  \put(0,\value{boxsize}){\line(1,0){\value{boxsize}}}}
\newcommand\numbox[1]{\put(0,0)\smbox%
  \put(0,0){\makebox(\value{boxsize},\value{boxsize})[c]{%
      $\scriptscriptstyle#1$}}}
\newcommand\hdotbox{\setcounter{tempcounter}{\value{boxsize}*2}
  \multiput(-\value{boxsize},0)(0,\value{boxsize})2{%
    \line(1,0){\value{tempcounter}}}
  \put(\value{boxsize},0){\line(0,1){\value{boxsize}}}
  \put(-\value{boxsize},0){\makebox(\value{tempcounter},\value{boxsize})[c]{%
      $\scriptscriptstyle\cdots$}}}
\newcommand\vdotbox{\setcounter{tempcounter}{\value{boxsize}*2}
  \multiput(0,-\value{boxsize})(\value{boxsize},0)2{%
    \line(0,1){\value{tempcounter}}}
  \put(0,\value{boxsize}){\line(1,0){\value{boxsize}}}
  \put(0,-\value{boxsize}){\makebox(\value{boxsize},\value{tempcounter})[c]{%
      $\scriptscriptstyle\vdots$}}}
\newcommand\boxes[2]{\setcounter{tempcounter}{#1*\value{boxsize}/2}
  \multiput(0,-\value{tempcounter})(0,\value{boxsize}){#1}\smbox
  \ifthenelse{#2=1}{\put(0,-\value{tempcounter}){%
      \makebox(\value{boxsize},\value{boxsize})[c]{$\scriptscriptstyle1$}}}{}%
  \ifthenelse{#2=2}{\put(0,-\value{tempcounter}){%
      \makebox(\value{boxsize},\value{boxsize})[c]{$\scriptscriptstyle2$}}}{}%
  \ifthenelse{#2=3}{\put(0,-\value{tempcounter}){%
      \makebox(\value{boxsize},\value{boxsize})[c]{$\scriptscriptstyle2$}}%
    \setcounter{tempcounter}{\value{tempcounter}-\value{boxsize}}%
  \put(0,-\value{tempcounter}){\makebox(\value{boxsize},\value{boxsize})[c]{$\scriptscriptstyle1$}}}{}}
\newtheoremstyle{mytheorems}{9pt}{6pt}{\itshape}{0pt}{\sc}{.}{ }{}
\newtheoremstyle{myremarks}{6pt}{3pt}{\normalfont}{0pt}{\it}{.}{ }{}
\theoremstyle{mytheorems}
\newtheorem{theorem}{Theorem}[section]
\newtheorem{lemma}[theorem]{Lemma}
\theoremstyle{myremarks}
\newtheorem*{observation}{Key Observation}
\newtheorem*{remark}{Remark}
\renewcommand\qed{\phantom{m.} $\!\!\!\!\!\!\!\!$\nolinebreak\hfill\checkmark}
\DeclareMathOperator*\lto{\longrightarrow}
\DeclareMathOperator\im{{\rm im}}
\DeclareMathOperator\len{{\rm len}}
\begin{document}
\vglue1truecm
\centerline{\Large From Littlewood-Richardson Sequences}
\medskip\centerline{\Large to Subgroup Embeddings and Back}
\bigskip
\centerline{By}
\medskip
\centerline{\sc Markus Schmidmeier}

\bigskip\medskip

\centerline{\parbox{10cm}{\footnotesize 
    {\it Abstract.} Let $\alpha$, $\beta$, and $\gamma$ be partitions describing the
    isomorphism types of the finite abelian $p$-groups $A$, $B$, and $C$.
    From theorems by Green and Klein it is well-known that there is a short exact
    sequence $0\to A\to B\to C\to 0$ of abelian groups if and only if there is a 
    Littlewood-Richardson sequence of type $(\alpha,\beta,\gamma)$. 
    Starting from the observation that a sequence of partitions has the LR property
    if and only if every subsequence of length 2 does, we demonstrate how LR-sequences
    of length two correspond to embeddings of a $p^2$-bounded subgroup in a finite abelian
    $p$-group.  Using the known classification of all such embeddings we derive
    short proofs of the theorems by Green and Klein.
  }}
\renewcommand{\thefootnote}{}
\footnotetext{{\it 2000 Mathematics Subject Classification:}
  5E05, 20E07}
\footnotetext{{\it Keywords:}
  Littlewood-Richardson sequences, subgroup embeddings, Birkhoff problem}

\section{Littlewood-Richardson sequences.}\label{sectionLR}

Let $\Gamma=[\gamma^0,\ldots,\gamma^r]$ be an increasing sequence of partitions defining a tableau $T$.
We visualize $T$ by putting a number $h$ into each box in
the skew diagram $\gamma^h-\gamma^{h-1}$, for each $h=1,\ldots,r$.  
As usual, $\Gamma$ is a 
{\it Littlewood-Richardson sequence} or {\it LR-sequence} provided 
(LR1) each skew diagram $\gamma^h-\gamma^{h-1}$ is a horizontal strip and
(LR2) the word $w(T)$ obtained by reading off the numbers in $T$ row-wise, and within each row
from right to left, is a lattice permutation. 

\smallskip
Conditions (LR1) and (LR2) can be expressed in terms of the parts of the $\gamma^h$; 
but two different descriptions are obtained dependent on
whether those parts represent the rows (as in \cite[II.3]{macdonald}) 
or the columns of the corresponding Young diagram.  
In this manuscript, the parts of a partition will represent the columns, 
and then we have the  following characterization (as in \cite[2.1]{klein}):
\begin{enumerate}
\item[(LR1)] For each $h\geq 1$ and every $k$, 
  we have $0\leq\gamma^h_k-\gamma^{h-1}_k\leq1$.
\item[(LR2)] For each $h\geq 2$ and every $k$, the following inequality holds:
  $$\sum_{i\geq k} \big(\gamma^h_i-\gamma^{h-1}_i\big)
  \leq \sum_{i\geq k} \big(\gamma^{h-1}_i-\gamma^{h-2}_i\big)$$
\end{enumerate}

The {\it type} of the sequence $\Gamma=[\gamma^0,\ldots,\gamma^r]$ 
is the triple 
$(\alpha,\gamma^r,\gamma^0)$ of partitions where $\alpha$ is such that its 
conjugate $\alpha'$ has $h$-th part
$\alpha'_h=|\gamma^h-\gamma^{h-1}|$, counting the number of $h$'s in the 
tableau.

\begin{observation}\label{obs}
\sloppy
The sequence $\Gamma$ is an LR-sequence if and only if for each 
$h\geq 2$, the sequence $[\gamma^{h-2},\gamma^{h-1},\gamma^h]$ is an LR-sequence.
Here we put $\gamma^h=\gamma^r$ for $h\geq r$. 
Moreover, if $\Gamma$ has type $(\alpha,\gamma^r,\gamma^0)$ then each sequence
$[\gamma^{h-2},\gamma^{h-1},\gamma^h]$ has type 
$((\alpha_{h-1}',\alpha_h')',\gamma^h,\gamma^{h-2})$. 
\end{observation}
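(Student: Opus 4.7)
The plan is to observe that the two defining conditions of an LR-sequence are local: (LR1) at step $h$ involves only the pair $\gamma^{h-1},\gamma^h$, while (LR2) at step $h$ involves only the triple $\gamma^{h-2},\gamma^{h-1},\gamma^h$. After reindexing a window $[\gamma^{h-2},\gamma^{h-1},\gamma^h]$ as $[\delta^0,\delta^1,\delta^2]$, the three requirements that make it an LR-sequence in its own right --- namely (LR1) for $(\delta^0,\delta^1)$, (LR1) for $(\delta^1,\delta^2)$, and (LR2) at index $2$ --- translate verbatim into conditions that already appear in the defining list for $\Gamma$.

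For the forward direction I would simply note that each of these conditions, when $\Gamma$ is LR, holds by hypothesis; the convention $\gamma^h=\gamma^r$ for $h\geq r$ makes the windows with $h>r$ into stacks of equal partitions, where (LR1) and (LR2) are trivial. For the converse I would observe that every consecutive pair $(\gamma^{h-1},\gamma^h)$ of $\Gamma$ sits inside at least one three-term window (for example the one indexed by $h$ if $h\geq 2$, or by $h+1$ if $h=1$), so (LR1) is inherited from the sub-triples; and (LR2) at each $h\geq 2$ is literally the (LR2) condition of the triple at index $h$.

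For the type assertion I plan to compute directly from the definition of type. Writing $[\delta^0,\delta^1,\delta^2]=[\gamma^{h-2},\gamma^{h-1},\gamma^h]$, its type is $(\beta,\gamma^h,\gamma^{h-2})$ where $\beta'_k=|\delta^k-\delta^{k-1}|$ for $k=1,2$. Substituting gives $\beta'_1=|\gamma^{h-1}-\gamma^{h-2}|=\alpha'_{h-1}$ and $\beta'_2=|\gamma^h-\gamma^{h-1}|=\alpha'_h$, so $\beta'=(\alpha'_{h-1},\alpha'_h)$ and hence $\beta=(\alpha'_{h-1},\alpha'_h)'$, as claimed. That $(\alpha'_{h-1},\alpha'_h)$ is indeed a partition is automatic since $\alpha'$, being the conjugate of a partition, is weakly decreasing.

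The whole argument is really an unwinding of definitions rather than a genuine theorem; the only even mildly delicate point is the bookkeeping around the edge case $h>r$, which the stated convention $\gamma^h=\gamma^r$ renders both well-defined and vacuous.
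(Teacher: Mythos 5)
Your proposal is correct and takes the same route as the paper, which states this as a Key Observation without any proof precisely because, as you note, (LR1) and (LR2) are local conditions on consecutive pairs and triples, so both the equivalence and the type computation are direct unwindings of the definitions. One small bookkeeping slip: for $h=r+1$ the window $[\gamma^{r-1},\gamma^r,\gamma^r]$ is not a stack of equal partitions, and its (LR2) condition, which reads $0\leq\sum_{i\geq k}\bigl(\gamma^r_i-\gamma^{r-1}_i\bigr)$, is not vacuous but follows from (LR1) for $\Gamma$ (nonnegativity of the column differences).
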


In this sense, arbitrary LR-sequences are generated by sequences of length 2.  
Such sequences are of the following form:

\begin{lemma}\label{r=2}
An increasing sequence $\Gamma=[\gamma^0,\gamma^1,\gamma^2]$ of partitions
is an LR-sequence if and only if two conditions are satisfied:
\begin{enumerate}
\item The set of columns in the tableau $T$ for $\Gamma$ is a totally ordered
  subset of the poset $\mathcal L$ with the ordering given by the horizontal position.
\begin{center}\hspace{-2em}
\begin{picture}(110,32)
\put(5,16){$\mathcal L:$}
\put(110,16){\boxes11}
\put(100,6){\boxes10}
\put(100,26){\boxes23}
\put(90,16){\boxes22}
\put(80,16){\boxes21}
\put(70,6){\boxes20}
\put(70,26){\boxes33}
\put(60,16){\boxes32}
\put(50,16){\boxes31}
\put(40,6){\boxes30}
\put(40,26){\boxes43}
\put(30,16){\boxes42}
\multiput(24.5,16)(30,0)3{\line(1,0){4}}
\multiput(34.5,19)(30,0)3{\line(1,1)4}
\multiput(34.5,13)(30,0)3{\line(1,-1)4}
\multiput(44.5,23)(30,0)3{\line(1,-1)4}
\multiput(44.5,9)(30,0)3{\line(1,1)4}
\put(15,16){$\cdots$}
\end{picture}
\end{center}
\item There is an injection
$\tau_{21}$ from the list $T_2$ of columns in $T$ of type 
\begin{picture}(12,3)\put(0,0){\numbox2}\put(3,0)\smbox\put(9,0){\hdotbox}
\end{picture}${\,}^t$
into the list $T_1$ of columns of type 
\begin{picture}(12,3)\put(0,0){\numbox1}\put(3,0)\smbox\put(9,0){\hdotbox}
\end{picture}${\,}^t$
which assigns to a column of length $\ell$ a column of length less than $\ell$. \qed
\end{enumerate}
\end{lemma}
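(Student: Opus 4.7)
The plan is to reformulate each of (LR1) and (LR2) as a column-wise condition on $T$, and then to match these column-wise conditions with (1) and (2) respectively.

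First I unpack (LR1). Because $\gamma^0\subseteq\gamma^1\subseteq\gamma^2$ with each difference a horizontal strip, every column of the ambient Young diagram picks up at most one $1$ and at most one $2$, with the $2$ sitting above the $1$ when both are present. Thus each column is of exactly one of the four types depicted in $\mathcal L$ (blank, just a $1$ on top, just a $2$ on top, or a $1$ capped by a $2$). In order for each $\gamma^h$ to be a partition, its column heights $\gamma^h_1\geq\gamma^h_2\geq\cdots$ must be weakly decreasing for $h=0,1,2$. A direct inspection of the Hasse diagram shows that the partial order on $\mathcal L$ coincides with the componentwise order on the triples $(\gamma^0_k,\gamma^1_k,\gamma^2_k)$: two nodes of $\mathcal L$ are comparable precisely when one dominates the other in every $\gamma^h$, and then the larger one appears farther to the left in the picture. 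Condition~(1)---that the columns of $T$, ordered by their horizontal position in $T$, form a chain in $\mathcal L$---therefore amounts to saying that the columns admit a left-to-right arrangement in which $\gamma^0,\gamma^1,\gamma^2$ are simultaneously weakly decreasing, which is exactly (LR1) together with the partition condition on each $\gamma^h$.

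Next I translate (LR2). The inequality
\[
\sum_{i\geq k}(\gamma^2_i-\gamma^1_i)\ \leq\ \sum_{i\geq k}(\gamma^1_i-\gamma^0_i)
\]
counts $2$-boxes on the left and $1$-boxes on the right among the columns at positions $\geq k$. A column carrying both a $1$ and a $2$ contributes to both sides and cancels, so (LR2) reduces to
\[
\#\{T_2\text{-columns at positions}\geq k\}\ \leq\ \#\{T_1\text{-columns at positions}\geq k\}\qquad\text{for every }k.
\]
Using the chain structure from the previous step, a $T_2$-column of length $\ell$ sits strictly to the right of every $T_1$-column of length $\geq\ell$ and strictly to the left of every $T_1$-column of length $<\ell$. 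Hence the count inequality is equivalent to the existence of a matching that sends each $T_2$-column to a $T_1$-column at a later position---equivalently, by the chain structure, to a $T_1$-column of strictly smaller length. This is precisely condition~(2); the equivalence itself is a standard Hall-type argument or an explicit greedy matching processed from right to left.

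The main obstacle I expect is the first step: verifying by inspection that the depicted partial order on $\mathcal L$ matches the componentwise order on the triples $(\gamma^0,\gamma^1,\gamma^2)$, and pinpointing the incomparable pairs. The paradigmatic incomparable pair is a mixed column (a $1$ capped by a $2$) of length $\ell$ together with a blank column of length $\ell-1$: one has the smaller $\gamma^0$ and the larger $\gamma^2$, so neither left-to-right arrangement of the two yields weakly decreasing partitions. Once this identification is made, the repackaging of (LR1) as condition~(1) falls out, and (LR2) reduces to the standard Hall-type matching problem characterising~(2).
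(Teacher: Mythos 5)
Your proof is correct, and it is exactly the unpacking the paper intends: the paper states this lemma with no proof at all (note the checkmark in the statement), treating it as immediate from the column-wise normalization of (LR1) and (LR2) given in Section~\ref{sectionLR}, which is precisely what you carry out --- (LR1) becomes membership of the columns in $\mathcal L$ (the chain property being automatic since the $\gamma^h$ are partitions), and (LR2) becomes the right-to-left counting inequality, which your chain-structure observation (a $T_2$-column of length $\ell$ lies strictly between the $T_1$-columns of length $\geq\ell$ and those of length $<\ell$) converts into the existence of the length-decreasing injection $\tau_{21}$ via the standard greedy/Hall matching. In short, you have supplied the details the paper suppresses, following the same route.
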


\section{Finite length modules.}

Let $R$ be a commutative principal ideal domain and $p$ a generator of a maximal ideal. 
A $p$-{\it module} is a finite length $R$-module which is annihilated by some power of $p$.
There is a one-to-one correspondence between the set of partitions 
and the set of isomorphism classes of $p$-modules given by
$$\lambda=(\lambda_1,\ldots,\lambda_s)\quad\longmapsto\quad
M(\lambda)=\bigoplus_{i=1}^s R/(p^{\lambda_i})$$
(see \cite[(1.4)]{macdonald}). 
The partition corresponding to the $p$-module is its {\it type.}

\smallskip
Given a $p$-module $B$ and a submodule $A$ of $B$ of exponent $r$
then there is an increasing sequence of partitions
$[\gamma^0,\ldots,\gamma^r]$ given by the types of the factor modules
$B/p^hA$, where $0\leq h\leq r$. 

\smallskip
Embeddings $(A\subseteq B)$ where $p^2A=0$
have been classified (\cite[Theorem~7.5]{bhw},\cite[Corollary 5.4]{ps}):

\begin{theorem}\label{ps}
  Let $B$ be a $p$-module and $A$ a submodule of $B$ which is
  $p^2$-bounded.  Then the embedding $(A\subseteq B)$ has a direct
  sum decomposition, unique up to isomorphy and reordering,
  into finitely many indecomposable embeddings of type $P^\ell_m$ and $Q^{\ell s}_2$ defined as follows:
$$
    \begin{array}{r@{:\;\;}r@{\;}l@{\;\;\text{for}\;}l}
      P_m^\ell & (p^{\ell-m}) & \subseteq R/(p^\ell) 
        & \ell\in\mathbb N,\;0\leq m\leq \max\{\ell,2\},\\[1ex]
      Q^{\ell s}_2  & ((p^{\ell-2},p^{s-1})) & \subseteq R/(p^\ell)\oplus R/(p^s)
        & \ell,s\in\mathbb N,\;s<\ell-1.\qed
      \end{array}
  $$
\end{theorem}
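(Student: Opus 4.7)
The plan is to establish the classification in three parts: the listed embeddings are indecomposable and pairwise non-isomorphic; every embedding $(A\subseteq B)$ with $p^2A=0$ admits a decomposition into such summands; and the decomposition is unique. Since the category of such embeddings is additive, has finite-length objects, and has split idempotents, it is Krull--Schmidt, so uniqueness is automatic from the other two parts.

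For indecomposability, each $P_m^\ell$ lies in the indecomposable module $R/(p^\ell)$ and is therefore indecomposable in the embedding category. For $Q_2^{\ell s}$, the submodule $A=\langle(p^{\ell-2},p^{s-1})\rangle$ is cyclic of order $p^2$, so a nontrivial splitting would force $A$ to be contained in a single cyclic direct summand of $B$ of length $\ell$ or $s$; a short computation using $s<\ell-1$ rules both possibilities out. Pairwise non-isomorphism can be read off from the partitions describing $A$, $B$, and $B/A$.

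For existence I would induct on the length of $B$. Choose an element $a\in A$ of maximal exponent $\ell$ in $B$, and among such elements choose $a$ so that its height $h$ in the $p$-adic filtration of $B$ is minimal. Let $H$ denote the height of $pa$ in $B$, with $H=\infty$ if $pa=0$. In the normal regime $H+1\geq\ell-1$ or $pa=0$, a suitable adjustment of $a$ by an element of $p^{\ell-2}B$ makes $\langle a\rangle$ a cyclic direct summand of $B$, and splitting this off yields a $P_m^\ell$ summand of the embedding, with $m\in\{0,1,2\}$ determined by the order of $a$. In the exceptional regime $H+1<\ell-1$, one picks $b\in B$ of height zero with $pa=p^Hb$ and exponent $s=H+1$; after a careful adjustment of $a$ and $b$, the pair generates a $Q_2^{\ell s}$ summand of $(A\subseteq B)$. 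Either way the complementary embedding satisfies $p^2A'=0$ and has strictly smaller length, so induction closes.

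The main obstacle is the splitting step in the exceptional regime. Realizing $\langle a,b\rangle$ as an honest direct summand of $B$ requires constructing a complementary $R$-submodule into which the remaining generators of $A$ project with zero component in $\langle a,b\rangle$. This coordination between the structure theorem for modules over $R$ and the position of $A$ inside $B$ --- especially controlling when $pa$ sits strictly deeper in $B$ than its naive height of $h+1$ --- is the core combinatorial content of the theorem and explains why the $Q$-family is needed in addition to the $P$-family.
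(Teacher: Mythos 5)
The first thing to note is that the paper contains no proof of Theorem~\ref{ps} to compare against: the result is quoted as known, with references to \cite[Theorem 7.5]{bhw} and \cite[Corollary 5.4]{ps}. So your attempt must stand entirely on its own. Two of your three parts do: the Krull--Schmidt argument for uniqueness is sound (pairs of finite-length $p^{N}$-bounded modules form a category with finite-length objects and split idempotents, equivalent to a summand-closed subcategory of modules over an artinian triangular matrix ring), and your indecomposability and non-isomorphism arguments for $P_m^\ell$ and $Q_2^{\ell s}$ are correct. The genuine gap is in the existence induction, and it is twofold: the branching rule is wrong, and the key splitting step is not carried out.

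For the branching rule, take $(A\subseteq B)=P_2^4\oplus P_1^2$, so $B=R/(p^4)\oplus R/(p^2)$ and $A=\langle(p^2,0)\rangle\oplus\langle(0,p)\rangle$. The elements of $A$ of maximal order $p^2$ and minimal height are $a=(up^2,vp)$ with $u,v$ units, of height $h=1$; for every such $a$ one has $pa=(up^3,0)$ of height $H=3>h+1$, so your recipe enters the exceptional regime and attempts to build a $Q$-summand from $a$ and $b=(1,0)$. But $\langle a,b\rangle$ contains $(0,p)=a-p^2b$, which has height $1$ in $B$ yet is not divisible by $p$ inside $\langle a,b\rangle$; hence $\langle a,b\rangle$ is not pure in $B$ and cannot be a direct summand. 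Worse, by the Krull--Schmidt property you yourself established, this pair has no $Q$-summand at all, so no adjustment of $a$ and $b$ by deeper elements can succeed: the only repair is to replace $a$ by $a-(0,p)$, an adjustment by another element of $A$, which raises the height of $a$ to $2$ and flips the example into the normal regime. (Under the alternative reading of your selection rule, where $\ell$ means height plus order, the rule fails instead on $Q_2^{\ell s}$ itself: it selects $pa$, of order $p$, and tries to split off a $P_1^\ell$ from an indecomposable pair.) So the normal/exceptional dichotomy cannot be read off from one greedily chosen element; it is governed by invariants of $A$ as a valuated module, i.e.\ by exactly the structure the cited proofs analyze. On top of this, the step you defer as ``the main obstacle'' --- producing a complement $C$ of the split-off cyclic part with $A=\langle a\rangle\oplus(A\cap C)$ --- is needed in the normal regime as well, not only the exceptional one, and it is the actual content of the theorem rather than a technicality to be postponed. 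As it stands, your proposal is a plausible plan whose case division fails on small examples and whose hardest step is missing; the machinery filling both holes is what \cite{bhw} (valuated groups) and \cite{ps} (explicit normal forms) provide.
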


\section{LR-sequences of length 2 and $p^2$-bounded
submodules}

The partition sequences of the indecomposable pairs $(A\subseteq B)$ with $p^2A=0$ are as follows.  
They are all LR-sequences.
\begin{center}
  \begin{picture}(110,24)
    \put(0,12){$P_0^\ell:$}
    \put(10,9){\begin{picture}(3,12)\put(0,0)\smbox \put(0,6)\vdotbox
        \put(3,3){$\left.\makebox(0,5){}\right\}{\!}_\ell$}
      \end{picture}}
    \put(30,12){$P_1^\ell:$}
    \put(40,8){\begin{picture}(3,15)\put(0,3){\boxes21}\put(0,9)\vdotbox
        \put(3,5){$\left.\makebox(0,6){}\right\}{\!}_\ell$}
      \end{picture}}
    \put(60,12){$P_2^\ell:$}
    \put(70,6){\begin{picture}(3,18)\put(0,4){\boxes33}\put(0,12)\vdotbox
        \put(3,6){$\left.\makebox(0,8){}\right\}{\!}_\ell$}
      \end{picture}}
    \put(87,12){$Q_2^{\ell s}:$}
    \put(105,0){\begin{picture}(6,24)
        \put(-6,11){${}_\ell\left\{\makebox(0,13){}\right.$}
        \put(0,3){\boxes22}\put(0,9)\vdotbox
        \put(0,15){\boxes20}\put(3,15){\boxes21}\put(0,21)\vdotbox
        \put(3,21)\vdotbox\put(6,17){$\left.\makebox(0,6){}\right\}{\!}_s$}
      \end{picture}}
  \end{picture}
\end{center}

When dealing with the direct sum of two pairs, the partition sequence 
of the sum is given by the union
  $\Gamma\cup\Delta=
  [\gamma^0\cup\delta^0,\gamma^1\cup\delta^1,\gamma^2\cup\delta^2]$ 
taken componentwise
  where $\Gamma=[\gamma^0,\gamma^1,\gamma^2]$ and 
  $\Delta=[\delta^0,\delta^1,\delta^2]$
  are the partition sequences of the two summands.
Note that the list of columns
  in the tableau for $\Gamma\cup\Delta$ is a reordering of the columns
  in the tableaux for $\Gamma$ and $\Delta$; however, each 
  (in $\mathcal L$ incomparable) pair of columns 
  arising from a sum $P_2^{\ell+1}\oplus P_0^\ell$ is replaced as follows:
  \begin{center}
    \begin{picture}(75,15)
      \put(0,7){Tableau for $P_2^{\ell+1}\oplus P_0^\ell$:}
      \put(47,4){\boxes33}\put(47,12)\vdotbox
      \put(52,7){$\cup$}
      \put(57,6){\boxes20}\put(57,12)\vdotbox
      \put(63,7){$=$}
      \put(69,4){\boxes32}\put(72,6){\boxes21}
      \put(69,12)\vdotbox \put(72,12)\vdotbox
    \end{picture}
  \end{center}
  
  Let $\Gamma=[\gamma^0,\gamma^1,\gamma^2]$ be the partition sequence of an embedding
$(A\subseteq B)$ with $p^2A=0$. We have seen
  that the columns of the corresponding tableau form a totally ordered subset
  of $\mathcal L$.  Columns of type 
  \begin{picture}(12,3)\put(0,0){\numbox2}\put(3,0)\smbox\put(9,0){\hdotbox}
    \end{picture}${\,}^t$
  arise for each summand of type $Q_2^{\ell s}$, where $s<\ell-1$,
  and for each pair of summands of type $P_2^{\ell+1}\oplus P_0^\ell$, as above.
  In each case there is a corresponding shorter column of type
  \begin{picture}(12,3)\put(0,0){\numbox1}\put(3,0)\smbox\put(9,0){\hdotbox}
    \end{picture}${\,}^t$.  
  The map $\tau_{21}$ given by this correspondence is a monomorphism, and
  Lemma~\ref{r=2} yields that  $\Gamma$ is an LR-sequence. The type of $\Gamma$
  is $(\alpha,\gamma^2,\gamma^0)$ where the partition $\alpha$ is given by
  $\alpha'_1=\len A/pA=|\gamma^2-\gamma^1|$, and
  $\alpha'_2=\len pA=|\gamma^1-\gamma^0|$.

\smallskip We have shown:

\begin{lemma}\label{lr1}
Let $A$, $B$, $C$ be $p$-modules of type $\alpha$, $\beta$ and $\gamma$, respectively,
such that $p^2A=0$ and such that there is a short exact sequence:
$$0\longrightarrow A\lto^\mu B\longrightarrow C\longrightarrow 0$$
For $h=0,1,2$, denote the type of $B/p^h\mu(A)$ by $\gamma^h$.
Then the partitions $[\gamma^0,\gamma^1,\gamma^2]$ form an LR-sequence
of type $(\alpha,\beta, \gamma)$.\qed
\end{lemma}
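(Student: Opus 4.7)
The plan is to reduce to indecomposable embeddings via Theorem~\ref{ps} and then verify the two conditions of Lemma~\ref{r=2} on the combined tableau. Write $(\mu(A)\subseteq B)$ as a direct sum of embeddings of type $P_m^\ell$ and $Q_2^{\ell s}$. Since the formation of quotients $B/p^h\mu(A)$ commutes with direct sums, the partition sequence of the whole embedding is the componentwise union of the summand sequences, and its tableau is (up to reordering) the multiset union of the summand tableaux.

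First I would read off the tableau of each indecomposable summand. Each $P_m^\ell$ contributes a single column of $\mathcal L$, and each $Q_2^{\ell s}$ (with $s<\ell-1$) contributes two columns which are themselves comparable in $\mathcal L$ and which already exhibit a local instance of the length-decreasing injection $\tau_{21}$ (the "$2$"-column has length $\ell$, the "$1$"-column has length $s<\ell$). Then I would combine summands: any pair of columns coming from different summands is comparable in $\mathcal L$ except for an incomparable pair arising from a sum $P_2^{\ell+1}\oplus P_0^\ell$, in which case the replacement illustrated in the excerpt produces a "$2$"-column of length $\ell+1$ and a "$1$"-column of length $\ell$, which are comparable and supply an additional $\tau_{21}$ pairing.

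Assembling the local pairings gives a global injection $\tau_{21}\colon T_2\to T_1$ with strictly shorter images, and the set of columns is totally ordered in $\mathcal L$; Lemma~\ref{r=2} then identifies $\Gamma$ as an LR-sequence. For the type statement I would compute $\alpha'_h=|\gamma^h-\gamma^{h-1}|=\len(p^{h-1}\mu(A)/p^h\mu(A))$ for $h=1,2$, noting that $\mu(A)\cong A$ and $p^2A=0$, so these equal $\len(A/pA)$ and $\len(pA)$, i.e.\ the conjugate parts of $\alpha$; this matches the definition of the type $(\alpha,\gamma^2,\gamma^0)=(\alpha,\beta,\gamma)$.

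The main obstacle is the case analysis showing that $P_2^{\ell+1}\oplus P_0^\ell$ is the \emph{only} source of incomparability between columns of different summands. This requires inspecting the finite list of indecomposable tableaux and checking, type by type, that no other pair straddles the "V"-shaped incomparability of $\mathcal L$; once this is verified, the replacement rule handles every problematic pair and the rest of the argument is bookkeeping.
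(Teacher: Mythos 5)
Your proposal is correct and follows essentially the same route as the paper's own argument: decompose the embedding via Theorem~\ref{ps}, observe that the combined tableau is the union of the indecomposable tableaux \emph{except} that each incomparable pair coming from $P_2^{\ell+1}\oplus P_0^\ell$ gets replaced by a comparable pair, build the length-decreasing injection $\tau_{21}$ from the $Q_2^{\ell s}$ summands and these replaced pairs, apply Lemma~\ref{r=2}, and compute the type via $|\gamma^h-\gamma^{h-1}|=\len\bigl(p^{h-1}\mu(A)/p^h\mu(A)\bigr)$. One small note: your first paragraph's claim that the tableau of the sum is the multiset union of the summand tableaux ``up to reordering'' is literally false precisely in the $P_2^{\ell+1}\oplus P_0^\ell$ case, but your second paragraph's replacement rule fixes this, exactly as the paper does.
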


\smallskip
Conversely, every LR-sequence of length 2 gives rise to a short exact
sequence of $p$-modules:

\begin{lemma}\label{lr3}
Given are an LR-sequence $\Gamma=[\gamma^0,\gamma^1,\gamma^2]$ of length 2 and type
$(\alpha,\beta,\gamma)$, a $p$-module $B$ of type $\beta=\gamma^2$
and a semisimple submodule $U$ of $B$ such that $B/U$ has type $\gamma^1$.
Then there is a submodule $A$ of $B$ containing $U$ and satisfying the following
conditions:
\begin{itemize}
\item The type of $A$ is $\alpha$,
\item $U=pA$ and
\item the type of $B/A$ is $\gamma^0=\gamma$.
\end{itemize}
\end{lemma}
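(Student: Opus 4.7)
The plan is to construct $A$ by refining the decomposition of $(U\subseteq B)$ supplied by Theorem~\ref{ps}, guided by the column structure of the tableau $T$ for $\Gamma$ described in Lemma~\ref{r=2}.

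First, decompose $(U\subseteq B)=\bigoplus_i(U_i\subseteq B_i)$ into indecomposables. Since $U$ is semisimple, only the types $P_0^\ell$ (with $U_i=0$) and $P_1^\ell$ (with $U_i\cong R/(p)$) can occur---the types $P_2^\ell$ and $Q_2^{\ell s}$ have non-semisimple submodules and are ruled out. A box-count of the skew shape $\gamma^2-\gamma^1$ column by column shows that the number of $P_1^\ell$-summands equals the number of length-$\ell$ columns in the sublist $T_2$ (columns of $T$ containing a ``$2$''), while the number of $P_0^\ell$-summands equals the number of length-$\ell$ columns in $T_0\cup T_1$.

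Next, I read off from $T$ a target decomposition of the pair $(A\subseteq B)$, reversing the construction preceding Lemma~\ref{lr1}: each $T_0$-column of length $\ell$ yields a summand $P_0^\ell$; each $T_1$-column of length $\ell$ not in the image of $\tau_{21}$ yields a $P_1^\ell$; each matched pair $\tau_{21}(c')=c$ of lengths $\ell+1$ and $\ell$ yields $P_2^{\ell+1}\oplus P_0^\ell$; and each matched pair of lengths $\ell'$ and $s$ with $s<\ell'-1$ yields $Q_2^{\ell's}$. Under the rule $P_0^\ell\mapsto P_0^\ell$, $P_1^\ell\mapsto P_0^\ell$, $P_2^\ell\mapsto P_1^\ell$, $Q_2^{\ell s}\mapsto P_1^\ell\oplus P_0^s$ recording the induced $(pA\subseteq B)$-decomposition, the multiset of summands produced by the target coincides with the actual decomposition of $(U\subseteq B)$ from the first step; this is the key combinatorial identity.

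By the uniqueness assertion in Theorem~\ref{ps} I fix a bijection between the actual summands of $(U\subseteq B)$ and the allocated roles, and enlarge each $U_i$ to an $A_i\subseteq B_i$ accordingly: stay $0$ for a kept $P_0^\ell$; set $A_i:=B_i[p]$ for a $P_0^\ell$ promoted to $P_1^\ell$; set $A_i:=p^{\ell-2}B_i$ for a $P_1^\ell$ promoted to $P_2^\ell$; and for a $Q_2^{\ell s}$-role, pair a $P_1^\ell$-summand $B_j$ (with cyclic generator $b_j$) with a $P_0^s$-summand $B_k$ (with generator $b_k$) and set $A_{jk}:=R\cdot(p^{\ell-2}b_j+p^{s-1}b_k)$. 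Taking $A$ to be the sum of all these submodules inside $B$, Lemma~\ref{lr1} applied summand-wise together with the componentwise union of partition sequences shows that $(A\subseteq B)$ has partition sequence $\Gamma$; hence $A$ has type $\alpha$, $pA=U$, and $B/A$ has type $\gamma^0$, as required.

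The main obstacle is the combinatorial count identity linking the $P_0/P_1$-decomposition of $(U\subseteq B)$ to the column data of $T$: the existence and injectivity of $\tau_{21}$ from Lemma~\ref{r=2} is exactly what ensures that every $P_1^\ell$-summand of $(U\subseteq B)$ can be paired with a $P_0^s$-summand of appropriate length $s<\ell$, so that all summands are consumed without leftovers and the explicit formulas above simultaneously realise the desired $A$ inside the given $B$.
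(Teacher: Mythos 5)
Your overall strategy is exactly the paper's: decompose $(U\subseteq B)$ by Theorem~\ref{ps} into summands of type $P_0^\ell$ and $P_1^\ell$, then use the injection $\tau_{21}$ of Lemma~\ref{r=2} to enlarge $U$ summand by summand. But your combinatorial bookkeeping has a genuine gap: you never account for columns that contain \emph{both} a 1 and a 2, i.e.\ columns with $\gamma_i^1-\gamma_i^0=1=\gamma_i^2-\gamma_i^1$. In Lemma~\ref{r=2} the list $T_2$ consists of columns containing \emph{only} a 2, and $T_1$ of columns containing only a 1; a ``21''-column lies in neither, and $\tau_{21}$ is not defined on it. This breaks your argument in three places. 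First, your count (``the number of $P_1^\ell$-summands of $(U\subseteq B)$ equals the number of length-$\ell$ columns in $T_2$'') is false whenever ``21''-columns exist: a $P_1^\ell$-summand arises for every column with $\gamma_i^2-\gamma_i^1=1$, that is, for $T_2$-columns \emph{and} ``21''-columns. Second, your target decomposition assigns nothing to a ``21''-column, whereas it must contribute a standalone summand $P_2^{\lambda_i}$; hence your ``key combinatorial identity'' fails already for the LR-sequence $[(\ell-2),(\ell-1),(\ell)]$, whose target list is empty while $(U\subseteq B)=P_1^\ell$. Third, the claim in your closing paragraph that \emph{every} $P_1^\ell$-summand of $(U\subseteq B)$ can be paired with a $P_0^s$-summand is wrong, and executing it produces a wrong module: for $\gamma^0=(1)$, $\gamma^1=(2,1)$, $\gamma^2=(3,1)$ one has $(U\subseteq B)=P_1^3\oplus P_0^1$, and pairing the two summands into $Q_2^{3,1}$ gives $pA=U$ but $A\cong R/(p^2)$ of type $(2)\neq\alpha=(2,1)$ and $B/A\cong R/(p^2)$ of type $(2)\neq\gamma^0$; the correct answer is $P_2^3\oplus P_1^1$, where the $P_1^3$ is promoted on its own.

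The repair is small, and you already have the needed ingredient: your enlargement rule ``$A_i:=p^{\ell-2}B_i$ for a $P_1^\ell$ promoted to $P_2^\ell$'' is exactly what a ``21''-column requires. Add to the target decomposition the clause that each column containing both a 1 and a 2, of length $\ell$, yields a summand $P_2^\ell$; reserve the pairing via $\tau_{21}$ for the $T_2$-columns in the strict sense of Lemma~\ref{r=2}; and correct the count to ``columns containing a 2''. With these changes the multiset identity holds, all summands are indeed consumed, and your construction coincides with the proof given in the paper.
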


\begin{remark} A submodule $U$ as in the lemma can be obtained as follows. 
Let $\lambda=\gamma^2$ and define $\kappa$ by letting $\kappa_i=\gamma^2_i-\gamma^1_i$.
Then the direct sum $\bigoplus_i P_{\kappa_i}^{\lambda_i}$ defines an embedding 
$(U'\subseteq B')$ with $B\cong_\varphi B'$.
Since $[\gamma^1,\gamma^2]$ is an LR-sequence, $0\leq \kappa_i\leq 1$ follows, and 
hence $U'$ is semisimple. Put $U=\varphi^{-1}(U')$. 
\end{remark}

\begin{proof}
By Theorem~\ref{ps}, the embedding $(U\subseteq B)$ is isomorphic to a direct sum of embeddings
of type $P^\ell_0$ and $P^\ell_1$ as above. We may assume that $\varphi$ is the identity map on $B$
and write:
$$(*)\qquad (U\subseteq B) \;=\; \bigoplus_i P_{\kappa_i}^{\lambda_i}\quad
\text{where $\kappa$ is as above and $\lambda=\gamma^2$.}$$
Since $\Gamma$ is an LR-sequence, say definining the tableau $T$,
Lemma~\ref{r=2} gives us an injective map 
$\tau_{21}:T_2\to T_1$ between the two lists of columns containing only a 2
and containing only a 1, respectively.
If the $i$-th column of the tableau (which is defined by the numbers
$(\gamma_i^0,\gamma_i^1,\gamma_i^2)$) does not occur among the columns 
in $T_2\cup\im\tau_{21}$, let $(A_i\subseteq B_i)$ denote the $i$-th summand 
$P_{\kappa_i}^{\lambda_i}$ in $(*)$.
Otherwise $\tau_{21}$ defines a pair $(i,j)\in T_2\cup T_1$ where $j=\tau_{21}(i)$
such that the $i$-th column has length $\ell$ and type 
\begin{picture}(12,3)\put(0,0){\numbox2}\put(3,0)\smbox\put(9,0){\hdotbox}
\end{picture}${\,}^t$
and the $j$-th column has length $s<\ell$ and type 
\begin{picture}(12,3)\put(0,0){\numbox1}\put(3,0)\smbox\put(9,0){\hdotbox}
\end{picture}${\,}^t$.
Let $B_i$ be the sum of the $i$-th and the $j$-th summand in $(*)$, then
choose $A_i\subseteq B_i$ such that 
$$(A_i\subseteq B_i)\quad\cong\quad\left\{\begin{array}{cl}
    P^\ell_2\oplus P^s_0 & \text{if}\;s=\ell-1\\
    Q^{\ell s}_2 & \text{if}\;s<\ell-1\end{array}\right..$$
Putting $A=\bigoplus_{i\in\{1,\ldots,n\}\backslash\im\tau_{21}}
    A_i$ yields a submodule of $B$ of type $\alpha$ such that $U=pA$.
The type of $B/A$ is computed from the types of the summands, as above
Lemma~\ref{lr1}.
\end{proof}

\section{The Theorems by Green and Klein}

Following \cite{klein}, we can now establish the correspondence between 
LR-sequences and short exact sequences of $p$-modules.

\begin{theorem}[Green]
Suppose $A$, $B$, and $C$ are $p$-modules of type 
$\alpha$, $\beta$, and $\gamma$ such that there is a short exact
sequence
$$0\lto A\lto^\mu B\lto C\lto 0.$$
Then the sequence of partitions $\Gamma=[\gamma^0,\ldots,\gamma^r]$
where $r=\alpha_1$ is the exponent of $A$ and $\gamma^h$ is the type of $B/p^h\mu(A)$ 
for $0\leq h\leq r$, is an LR-sequence of type $(\alpha,\beta,\gamma)$.
\end{theorem}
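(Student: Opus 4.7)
My plan is to reduce the statement to the length-2 case established in Lemma~\ref{lr1}, by exploiting the Key Observation: $\Gamma=[\gamma^0,\ldots,\gamma^r]$ is an LR-sequence precisely when every three-term window $[\gamma^{h-2},\gamma^{h-1},\gamma^h]$ is one, with the convention $\gamma^h=\gamma^r$ for $h\geq r$. So for each $h\geq 2$ I need to exhibit a short exact sequence of $p$-modules whose submodule is $p^2$-bounded and whose associated triple of partitions (in the sense of Lemma~\ref{lr1}) is precisely this window.

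Write $M=\mu(A)\subseteq B$. The candidate sequence for the window at index $h$ is
$$0\lto p^{h-2}M/p^hM\lto B/p^hM\lto B/p^{h-2}M\lto 0.$$
The left term $A^{(h)}:=p^{h-2}M/p^hM$ is killed by $p^2$, and $pA^{(h)}=p^{h-1}M/p^hM$. Hence the three successive quotients $(B/p^hM)/p^jA^{(h)}$ for $j=0,1,2$ are $B/p^{h-2}M$, $B/p^{h-1}M$, $B/p^hM$, whose types are $\gamma^{h-2}$, $\gamma^{h-1}$, $\gamma^h$. Lemma~\ref{lr1} then returns the window as an LR-sequence. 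For $h>r$ the same construction still applies (now with $p^hM=0$), and alternatively one may just note that the windows become $[\gamma^{r-1},\gamma^r,\gamma^r]$ or constant triples, which satisfy (LR1) and (LR2) trivially.

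It remains to check that $\Gamma$ has type $(\alpha,\beta,\gamma)$. Since $B/M\cong C$ one has $\gamma^0=\gamma$; since $p^rM=0$ (using that $r=\alpha_1$ is the exponent of $A$) one has $\gamma^r=\beta$; and the identity $|\gamma^h-\gamma^{h-1}|=\len(p^{h-1}A/p^hA)=\alpha'_h$ (which counts the parts of $\alpha$ of size at least $h$) reads off the conjugate $\alpha'$ correctly. I do not anticipate a genuine obstacle: the proof amounts to the observation that a three-term slice of the filtration $B\supseteq M\supseteq pM\supseteq p^2M\supseteq\cdots$ can be realized as a short exact sequence with a $p^2$-bounded submodule, after which Lemma~\ref{lr1} together with the Key Observation do all the work.
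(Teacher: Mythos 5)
Your proposal is correct and follows essentially the same route as the paper: reduce $\mu$ to an inclusion, slice the filtration via the short exact sequences $0\to p^{h-2}A/p^hA\to B/p^hA\to B/p^{h-2}A\to 0$ whose kernels are $p^2$-bounded, apply Lemma~\ref{lr1} to each three-term window, and conclude with the Key Observation. Your extra bookkeeping (the explicit check of the type $(\alpha,\beta,\gamma)$ and of the windows with $h>r$) is fine and merely makes explicit what the paper leaves implicit.
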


\begin{proof}
\sloppy
We may assume that $\mu$ is an inclusion, then $A$ is a submodule of $B$.
For each $2\leq h\leq r$, we have the short exact sequence
$$0\lto p^{h-2}A/p^hA\lto^\mu B/p^hA \lto B/p^{h-2}A\lto 0.$$
The modules $B/p^hA$, $(B/p^hA)/(p^{h-1}A/p^hA)\cong B/p^{h-1}A$, and $B/p^{h-2}A$
have type $\gamma^h$, $\gamma^{h-1}$, and $\gamma^{h-2}$, respectively, and 
hence, by Lemma~\ref{lr1}, the sequence  $[\gamma^{h-2},\gamma^{h-1},\gamma^h]$
is an LR-sequence of type $\big((\alpha_{h-1}',\alpha_h')',\gamma^h,\gamma^{h-2}\big)$
where $\alpha_h'=|\gamma^h-\gamma^{h-1}|$ and $\alpha_{h-1}'=|\gamma^{h-1}-\gamma^{h-2}|$.
It follows from the Key Observation in Section~\ref{sectionLR} that $\Gamma$ 
is an LR-sequence of type $(\alpha,\gamma^r,\gamma^0)$. 
\end{proof}

\begin{theorem}[Klein]\label{klein}
If $\Gamma=[\gamma^0,\ldots,\gamma^r]$ is an LR-sequence of type $(\alpha,\beta,\gamma)$,
and if $A$, $B$, and $C$ are $p$-modules of type $\alpha$, $\beta$, and $\gamma$,
respectively, 
then there is a short exact sequence
$$0\lto A \lto^\mu B\lto C\lto 0$$
such that $\gamma^h$ is the type of $B/p^h\mu(A)$ for each $0\leq h\leq r$.
\end{theorem}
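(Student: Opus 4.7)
The plan is to induct on $r$, the length of the LR-sequence (equivalently the exponent of $A$). For $r\leq 2$ I would invoke Lemma~\ref{lr3} directly, applied to a semisimple submodule $U\subseteq B$ with $B/U$ of type $\gamma^1$ (which exists by the remark following that lemma); the degenerate cases $r\leq 1$ are subsumed by padding the sequence with a repeated last term.

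For the inductive step $r\geq 3$, I would first restrict to the tail $[\gamma^1,\ldots,\gamma^r]$, which by the Key Observation is again an LR-sequence, now of length $r-1$ and type $(\tilde\alpha,\gamma^r,\gamma^1)$ with $\tilde\alpha'_h=\alpha'_{h+1}$. Since $\tilde\alpha$ is precisely the type of $pA$ when $A$ has type $\alpha$, the inductive hypothesis produces a short exact sequence $0\to V\to B\to C'\to 0$ with $V$ of type $\tilde\alpha$, $B$ of type $\gamma^r=\beta$, and $B/p^hV$ of type $\gamma^{h+1}$ for $0\leq h\leq r-1$. I would then identify $V$ with its image in $B$.

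Next I would extend $V$ to the sought submodule $A$ by passing to the quotient $B/pV$, which has type $\gamma^2$. Here $V/pV$ is semisimple and $(B/pV)/(V/pV)=B/V$ has type $\gamma^1$. By the Key Observation, $[\gamma^0,\gamma^1,\gamma^2]$ is an LR-sequence of type $((\alpha_1',\alpha_2')',\gamma^2,\gamma^0)$, so Lemma~\ref{lr3} applied to $B/pV$ and $V/pV$ produces $A''\subseteq B/pV$ of type $(\alpha_1',\alpha_2')'$ with $pA''=V/pV$ and $(B/pV)/A''$ of type $\gamma^0$. I would then take $A\subseteq B$ to be the preimage of $A''$.

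The main obstacle will be the bookkeeping needed to show that this $A$ has type $\alpha$ and satisfies $p^hA=p^{h-1}V$ for $h\geq 1$. From $pA''=V/pV$ one gets $pA+pV=V$, and Nakayama's lemma (applicable since $V$ is a finite-length $p$-module) upgrades this to $pA=V$. Hence $B/pA=B/V$ has type $\gamma^1$, and iteratively $B/p^hA=B/p^{h-1}V$ has type $\gamma^h$ for all $1\leq h\leq r$ by the inductive hypothesis. Finally, $A/pA=A''/pA''$ is semisimple of length $\alpha_1'$ while $pA=V$ has type $\tilde\alpha$, so the dimensions $\dim p^hA/p^{h+1}A=\alpha'_{h+1}$ come out correctly for every $h\geq 0$, forcing $A$ to have type $\alpha$ and completing the induction.
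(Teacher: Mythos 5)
Your proof is correct and is essentially the paper's own argument: the paper performs the identical construction as a downward iteration, starting from the top window $[\gamma^{r-2},\gamma^{r-1},\gamma^r]$ and then repeatedly applying Lemma~\ref{lr3} to $B/pU_{h+1}$ with the semisimple submodule $U_{h+1}/pU_{h+1}$ and pulling back along $B\to B/pU_{h+1}$, which is exactly what your induction on $r$ unrolls to. The only difference is presentational: your explicit verification that $pA=V$ (via Nakayama, though the containment $pV\subseteq pA$ already gives it) and that $A$ has type $\alpha$ is left implicit in the paper's proof.
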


\begin{proof}
We may assume that $r\geq 2$.
First consider the LR-sequence $[\gamma^{r-2},\gamma^{r-1},\gamma^r]$.
By Lemma~\ref{lr3} and the following remark there is a $p$-module $B$ 
and a submodule $U_{r-2}\subseteq B$ such that $B$, $B/pU_{r-2}$ and $B/U_{r-2}$
have type $\gamma^r$, $\gamma^{r-1}$, and $\gamma^{r-2}$, respectively. 
We put $U_r=0$, $U_{r-1}=pU_{r-2}$ and define for $h=r-3,r-4,\ldots,0$
successively submodules $U_h$ of $B$ such that the conditions
$$p^sU_h=U_{h+s}\quad\text{and}\quad B/U_h\;\text{has type}\;\gamma^h$$
hold for all $0\leq h\leq h+s\leq r$.
Suppose $U_{h+1}$ has been constructed.  Consider the module $B'=B/pU_{h+1}$
and the semisimple submodule $U'=U_{h+1}/pU_{h+1}$.  
By Lemma~\ref{lr3}, the LR-sequence $[\gamma^{h},\gamma^{h+1},\gamma^{h+2}]$ 
yields a submodule $A'$ of $B'$ such that $pA'=U'$ and $B'$, $B/U'$, $B/A'$
have type $\gamma^{h+2}$, $\gamma^{h+1}$, and $\gamma^{h}$, respectively.
Let $U_h$ be the inverse image of $A'$ under the
canonical map $B\to B'$.

\smallskip
This process yields a submodule $A=U_r$ of $B$ with the property that the type
of $B/p^hA=B/U_h$ is $\gamma^h$ for each $0\leq h\leq r$. 
\end{proof}

\smallskip

\bigskip\bigskip
Address of the author:\hfill
\begin{minipage}[t]{7cm}
  Mathematical Sciences\\
  Florida Atlantic University\\
  Boca Raton, Florida 33431-0991\\
  United States of America\\[1ex]
  e-mail: {\tt markus@math.fau.edu}
\end{minipage}

\end{document}